\documentclass[review]{elsarticle}
\usepackage{lineno}
\usepackage[latin1]{inputenc}
\usepackage{times}
\usepackage{amssymb,mathrsfs, amsmath}
\usepackage{amsmath,amsthm}
\usepackage{amssymb}
\usepackage{latexsym}
\usepackage{amsfonts}
\usepackage{mathrsfs}
\usepackage{epsfig}
\usepackage{hyperref}
\usepackage{graphicx}
\usepackage{graphics}
\usepackage{fancyhdr}
\pagestyle{fancy}
\newtheorem{thm}{Theorem}[section]

\newtheorem{defn}{Definition}[section]
\newtheorem{prop}{Proposition}[section]

\newtheorem{rem}{Remark}[section]

\newtheorem{cor}{Corollary}[section]

\newtheorem{exmpl}{Example}[section]

%\modulolinenumbers[5]

\bibliographystyle{elsarticle-num}
%%%%%%%%%%%%%%%%%%%%%%%

\begin{document}

\begin{frontmatter}

\title{Cohomology and deformation  of module  homomorphisms}
\author{ RB Yadav $^1$\fnref{}\corref{mycorrespondingauthor}}
\ead{rbyadav15@gmail.com}
\author{ Liangyun Chen$^2$, Yao Ma$^2$ }
\ead{ chenly640@nenu.edu.cn, may703@nenu.edu.cn}
\author{  Ying Hou$^3$}
\ead{chenly640@nenu.edu.cn}
\address{$^1$ Department of Mathematics, Sikkim University, Gangtok, Sikkim, 737102, \textsc{India}\\$^2$ School of Mathematics and Statistics, Northeast Normal University, Changchun, 130024, \textsc{China}\\$^3$Department of Mathematics, Northeast Forestry University, Harbin, 150040,\textsc{China}}
\cortext[mycorrespondingauthor]{Corresponding author}
%\lhead{RB Yadav}
\begin{abstract}
In this paper, we mainly focus on formal deformation theory of module homomorphisms. We first introduce the cohomology of module homomorphisms and study formal one-parameter  deformation. We obtain some properties about obstructions. Then we give some examples of deformations of modules and module homomorphisms.
\end{abstract}
\begin{keyword}
\texttt{Hochschild cohomology,  deformations, modules}
\MSC[2020] 13D03 \sep 13D10\sep 14D15 \sep 	16E40
\end{keyword}

\end{frontmatter}
\section{Introduction}\label{rbsec1}
 M. Gerstenhaber introduced algebraic deformation theory in a series of papers \cite{MG1},\cite{MG2},\cite{MG3}, \cite{MG4}, \cite{MG5}. He studied deformation theory of associative algebras.  Deformation theory of associative algebra morphisms was studied by M. Gerstenhaber and S.D. Schack \cite{GS1}, \cite{GS2}, \cite{GS3}. Deformation theory of Lie algebras was studied by Nijenhuis and Richardson \cite{NR1}, \cite{NR2}. Algebraic deformations of modules were first studied  by Donald and Flanigan \cite{DF}. They had to restrict themselves to finite dimensional algebras R over a field k and finite dimensional R-modules M. Recently, deformation theory of modules (without any restriction on dimension) was studied in \cite{DY1}.

The above representative and significant works inspired us to work on cohomology and deformation theory  of module homomorphisms.

Organization of the paper is as follows. In Section \ref{rbsec2}, we recall some definitions and results about deformation of module. In Section \ref{rbsec3}, we introduce  deformation complex and  deformation cohomology of a module homomorphism. In Section \ref{rbsec4}, we introduce  deformation of a module  homomorphism. In this section  we prove one of our most important  results that obstructions to deformations are cocycles. In Section \ref{rbsec5}, we study equivalence of two deformations  of a module  homomorphism. In Section \ref{chm6}, we give some examples of deformations of modules and module homomorphisms.  We show that if $A=k,$ then every deformation of  the module $M$ is trivial, that is $M$ is rigid. Using this we give large class of examples of deformations of a module homomorphism $\phi:M\to N.$

\section{Preliminaries}\label{rbsec2}
In this section, we recall definition of  Hochschild cohomology,  and  deformation  of a module from \cite{DY1}.  Throughout this paper, $k$ denotes a  commutative ring with unity, $A$ denotes an associative $k$-algebra, and $M$ denotes a  (left) $A$-module. Also, we write $\otimes$ for $\otimes_k$, the tensor product  over $k$, and $A^{\otimes n}$ for $A\otimes\cdots\otimes A$ (n factors). We use notation $(x,y)$ for both $x\oplus y\in M_1\oplus M_2$ and $x\otimes y\in A^{\otimes2}$ and recognize them from  context.
  Let $A$ be an associative $k$-algebra and $F$ be an $A$-bimodule .  Let $C^n(A;F)=Hom_k(A^{\otimes n},F),$ for all integers $n\ge 0.$ In particular, $C^0(A;F)=Hom_k(k,F)\equiv F.$ Also, define a $k$-linear map $\delta^n:C^n(A;F)\to C^{n+1}(A;F)$ given by
  \begin{eqnarray*}
  % \nonumber % Remove numbering (before each equation)
    \delta ^nf(x_1,\cdots, x_{n+1}) &=& x_1f(x_2,\cdots,x_{n+1})+\sum_{i=1}^{n}(-1)^if(x_1,\cdots,x_ix_{i+1}, \cdots, x_{n+1}) \\
     && +(-1)^{n+1}f(x_1,\cdots,x_n)x_{n+1},
  \end{eqnarray*}
   for $n\ge 1.$ $\delta^0(m)(a)=am-ma,$ for all $m\in F$, $a\in A.$  This gives a cochain complex  $(C^{\ast}(A;F),\delta)$ ,  cohomology of which is denoted by $H^{\ast}(A;F)$ and called as Hochschild cohomology of $A$ with coefficients in $F$.

Let $M$ and $N$ be (left) $A$-modules. The set of $k$-linear maps from $M$ to $N$, $Hom_k(M,N)$, has a structure of  an $A$-bimodule  such that $$(rf)(m)=r(f(m))\;\; \text{and}\; (fs)(m)=f(sm),$$ for all $r,s\in A$, $f\in Hom_k(M,N)$ and $m\in M$.   In particular, the set of $k$-linear endomorphisms of $M$, $End(M)$ is  $A$-bimodule. Moreover,  $End(M)$ is also an associative $k$-algebra with composition of endomorphisms as product.

From \cite{DY1}, we recall definition of deformation of a left $A$-module $M$. Note that $A$-module structure on $M$ is equivalent to an associative algebra morphism $\xi:A\to End(M)$  such that $\xi(r)m=rm,$ for all $r\in A$ and $m\in M.$
\begin{defn}
Let $A$ be an associative $k$-algebra and $M$ be a left  $A$-module.
Define $C^n(M)=C^n(A,End(M))$, $\forall n\ge 0.$ Then $(C^{\ast}(M),\delta)$ is a cochain complex. We call the cohomology of this complex as deformation cohomology of $M$ and denote it by $H^{\ast}(M)$.\\
 A formal one-parameter deformation of $M$ is defined to be the formal power series $\xi_t=\sum_{i=0}^{\infty}\xi_it^i $ such that
  \begin{itemize}
    \item [(a)]  $\xi_i\in Hom_k(A,End(M))$, $\forall$ i, $\xi_0=\xi.$
    \item [(b)]$\xi_t(rs)=\xi_t(r)\xi_t(s),$ $\forall r,s\in A.$
   \end{itemize}
   \end{defn}
  \begin{rem}
  Note that condition (b) in above definition is equivalent to $\xi_n(rs)=\sum_{i+j=n}\xi_i(r)\xi_j(s),$ for all $n\ge 0.$
  \end{rem}
\begin{defn}
  A formal one-parameter deformation of order n for $ M $ is defined to be the formal power series $\xi_t=\sum_{i=0}^{n}\xi_it^i $ such that
  \begin{itemize}
    \item [(a)]  $\xi_i\in Hom_k(A,End(M))$, $\forall$ i, $\xi_0=\xi.$
    \item [(b)]$\xi_t(rs)=\xi_t(r)\xi_t(s),$  (modulo $t^{n+1}$) $\forall r,s\in A.$
   \end{itemize}
   \end{defn}
 \begin{rem}
  Note that condition (b) in above definition is equivalent to $\xi_l(rs)=\sum_{ i+j=l}\xi_i(r)\xi_j(s)$, for all $n\ge l\ge 0.$
 \end{rem}
\section{ Deformation complex of a module homomorphism}\label{rbsec3}
\begin{defn}\label{def3.1}
Let $M$, $N$ be left $A$-modules and $\phi:M\to N$ be an  $A$-module  homomorphism. We define
$$C^n(\phi)=C^n(A;End(M))\oplus C^n(A;End(N))\oplus C^{n-1}(A;Hom_k(M,N)),$$ for all $n\in \mathbb{N}$ and $C^0(\phi)=0$.
For any  $A$-module homomorphism $\phi:M\to N $, $u\in C^n(A;End(M))$, $v\in C^n(A;End(N))$, define    $\phi u:A^{\otimes n}\to Hom(M,N)$ and $v\phi:A^{\otimes n}\to Hom_k(M,N)$ by $\phi u(x_1, x_2,\cdots, x_n)(m)=\phi( u(x_1, x_2,\cdots, x_n)(m))$,   $v\phi(x_1, x_2,\cdots, x_n)(m)=v(x_1, x_2, \cdots,  x_n)(\phi (m)),$ for all $(x_1, x_2,\cdots, x_n)\in A^{\otimes n},$ $m\in M.$  Also, we define $d^n:C^n(\phi)\to C^{n+1}(\phi)$ by $$d^n(u,v,w)=(\delta^n u, \delta^n v, \phi u-v\phi -\delta^{n-1}w ),$$ for all $(u,v,w)\in C^n(\phi).$ Here the $\delta^n$'s denote coboundaries of the cochain complexes $C^{\ast}(A;End(M))$,  $C^{\ast}(A;End(N))$ and $C^{\ast}(A;Hom_k(M,N))$.
\end{defn}
\begin{prop}
  $(C^{\ast}(\phi), d)$ is a cochain complex.
\end{prop}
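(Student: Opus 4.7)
The plan is to verify $d^{n+1}\circ d^n=0$ componentwise. Applied to $(u,v,w)\in C^n(\phi)$, this reads
\[
d^{n+1}\bigl(\delta^n u,\;\delta^n v,\;\phi u-v\phi-\delta^{n-1}w\bigr),
\]
so the three components to check are $\delta^{n+1}\delta^n u$, $\delta^{n+1}\delta^n v$, and
\[
\phi(\delta^n u)-(\delta^n v)\phi-\delta^n\bigl(\phi u-v\phi-\delta^{n-1}w\bigr).
\]
The first two vanish because $\delta^{n+1}\delta^n=0$ on the ordinary Hochschild complexes $C^{\ast}(A;\mathrm{End}(M))$ and $C^{\ast}(A;\mathrm{End}(N))$, a fact already invoked in Section~\ref{rbsec2}. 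The same identity kills the summand $\delta^n\delta^{n-1}w$ inside the third component, reducing the remaining task to the two chain-map identities
\[
\delta^n(\phi u)=\phi(\delta^n u),\qquad \delta^n(v\phi)=(\delta^n v)\phi
\]
in $C^{n+1}(A;\mathrm{Hom}_k(M,N))$.

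I would prove each of these identities by unwinding the Hochschild formula for $\delta^n$ on both sides, using the bimodule structure $(af)(m)=af(m)$ and $(fa)(m)=f(am)$ on $\mathrm{Hom}_k(M,N)$. The ``interior'' summands, namely those of the form $u(x_1,\ldots,x_ix_{i+1},\ldots)$ or the analogous ones for $v$, match on the nose on both sides because $\phi$ is $k$-linear. The two ``boundary'' summands involve the outer $A$-action, and this is where $A$-linearity of $\phi$ is used: for the first identity it is the left outer term that requires $\phi\bigl(x_1 u(x_2,\ldots)(m)\bigr)=x_1\phi\bigl(u(x_2,\ldots)(m)\bigr)$, while the right outer term is tautological; for the second identity the roles are reversed, the right outer term needing $v(\ldots)\bigl(\phi(x_{n+1}m)\bigr)=v(\ldots)\bigl(x_{n+1}\phi(m)\bigr)$.

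The main (and essentially only) obstacle is bookkeeping: three different coboundary operators $\delta^n$ are at play, acting on cochains with values in three different bimodules, and one must consistently apply the correct bimodule action in each case. Once that is done, the statement $d^{n+1}\circ d^n=0$ reduces to the observation that $\phi$ being an $A$-module map is exactly what makes the two ``multiplication by $\phi$'' operations $u\mapsto \phi u$ and $v\mapsto v\phi$ into morphisms of Hochschild cochain complexes.
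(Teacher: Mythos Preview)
Your proposal is correct and follows exactly the same route as the paper: compute $d^{n+1}d^n(u,v,w)$ componentwise, use $\delta\circ\delta=0$ on the three Hochschild complexes, and reduce the third component to the identity $\delta^n(\phi u - v\phi)=\phi(\delta^n u)-(\delta^n v)\phi$. The paper dispatches that last identity with ``one can easily see,'' whereas you actually unpack it into the two chain-map statements and pinpoint where $A$-linearity of $\phi$ enters; this is more detail than the paper gives, not a different argument.
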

\begin{proof}
  We have \begin{eqnarray*}
          % \nonumber % Remove numbering (before each equation)
           &&d^{n+1}d^n(u,v,w)\\
              &=& d^{n+1}(\delta^n u, \delta^n v, \phi u-v\phi -\delta^{n-1}w ) \\
             &=& (\delta^{n+1}\delta^n u, \delta^{n+1}\delta^n v, \phi(\delta^n u)- (\delta^n v)\phi- \delta^n( \phi u-v\phi -\delta^{n-1}w ))
          \end{eqnarray*} One can easily see that $\delta^n( \phi u-v\phi)=\phi (\delta^n u)- (\delta^n v)\phi$. So, since $\delta^{n+1}\delta^n u=0,$ $\delta^{n+1}\delta^n v=0$, $\delta^{n+1}\delta^n w=0$, we have $d^{n+1}d^n=0$. Hence we conclude the result.
\end{proof}

We call the cochain complex $(C^{\ast}(\phi),d)$  as deformation complex of $\phi,$ and the corresponding cohomology as  deformation cohomology of $\phi$. We denote the  deformation cohomology by $H^n(\phi)$, that is, $H^n(\phi)=H^n(C^{\ast}(\phi),d)$. Next proposition relates $H^{\ast}(\phi)$ to $H^{\ast}(A,End(M))$,  $H^{\ast}(A,End(N))$ and $H^{\ast}(A,Hom_k(M,N))$.

\begin{prop}\label{rb-99}
  \hspace{0.3cm}If\hspace{0.3cm}  $H^n(A,End(M))=0$,\hspace{0.3cm}  $H^n(A,End(N))=0$\hspace{0.3cm} and\hspace{0.3cm} \\$H^{n-1}(A,Hom_k(M,N))=0$, then $H^{n}(\phi)=0$.
\end{prop}
\begin{proof}
  Let $(u,v,w)\in C^n(\phi)$ be a cocycle, that is, $d^n(u,v,w)=(\delta^n u, \delta^n v, \phi u-v\phi -\delta^{n-1}w )=0$. This implies that $\delta^n u=0$, $\delta^n v=0$, $\phi u-v\phi -\delta^{n-1}w =0$. $H^n(A,End(M))=0 \Rightarrow u=\delta^{n-1}u_1$, for some $u_1\in C^{n-1}(A,End(M))$, and  $H^n(A,End(N))=0\Rightarrow \delta^{n-1}v_1=v$, for some $v_1\in C^{n-1}(A,End(N))$.
  So \begin{eqnarray*}
     % \nonumber % Remove numbering (before each equation)
       0 &=& \phi u-v\phi -\delta^{n-1}w \\
        &=& \phi(\delta^{n-1}u_1)-(\delta^{n-1}v_1)\phi-\delta^{n-1}w\\
        &=& \delta^{n-1}(\phi u_1)-\delta^{n-1}(v_1\phi)-\delta^{n-1}w \\
        &=& \delta^{n-1}(\phi u_1-v_1\phi-w).
     \end{eqnarray*}
    Hence $\phi u_1-v_1\phi-w\in C^{n-1}(A,Hom_k(M,N))$ is a cocycle. Now,
     \begin{eqnarray*}
     % \nonumber % Remove numbering (before each equation)
       H^{n-1}(A,Hom_k(M,N))=0 &\Rightarrow& \phi u_1-v_1\phi-w=\delta^{n-2}w_1,\\
       && \text{for some}\;  w_1\in C^{n-2}(A,Hom_k(M,N)), \\
        &\Rightarrow& \phi u_1-v_1\phi-\delta^{n-2}w_1=w.
     \end{eqnarray*}
      Thus $(u,v,w)=(\delta^{n-1} u_1, \delta^{n-1 }v_1, \phi u_1-v_1\phi -\delta^{n-2}w_1 )=d^{n-1}(u_1,v_1,w_1)$, for some $(u_1,v_1,w_1)\in C^{n-1}(\phi).$  Thus every cocycle in $C^n(\phi)$ is a coboundary. Hence we conclude that $H^{n}(\phi)=0$.
\end{proof}
\section{ Deformation of  a module  homomorphism}\label{rbsec4}
\begin{defn}\label{rb2}

Let $M$ and $N$ be (left) $A$-modules. A formal one-parameter deformation of a module  homomorphism $\phi:M\to N$ is a triple $(\xi_t, \eta_t, \phi_t)$, in which:

\begin{enumerate}
  \item  $\xi_t=\sum_{i=0}^{\infty}\xi_i t^i$ is a formal one-parameter deformation for $M$.
  \item  $\eta_t=\sum_{i=0}^{\infty}\eta_it^i$ is a formal one-parameter deformation for  $N$.
  \item  $\phi_t=\sum_{i=0}^{\infty}\phi_it^i$, where $\phi_i:M\to N$ is  a module  homomorphism such that $\phi_t(\xi_t(r)m)=\eta_t(r)\phi_t(m),$  for all $r\in A$, $m\in M$ and $\phi_0=\phi$.
\end{enumerate}
\end{defn}
\begin{rem}
  Note that a triple  $(\xi_t, \eta_t, \phi_t)$, as given above, is a formal one-parameter deformation of $\phi$ provided following properties are satisfied.
\begin{itemize}
  \item[(i)] $\xi_t(rs)=\xi_t(r)\xi_t(s)$, for all $r,s\in A;$
   \item[(ii)]$\eta_t(rs)=\eta_t(r)\eta_t(s)$, for all $r,s\in A;$
  \item[(iii)] $\phi_t(\xi_t(r)m)=\eta_t(r)\phi_t(m),$  for all $r\in A$, $m\in M$ .
\end{itemize}
The conditions $(i)$, $(ii)$ and $(iii)$ are equivalent to following conditions respectively.
\begin{equation}\label{rbeqn1}
  \xi_l(rs)=\sum_{i+j=l}\xi_i(r)\xi(s), \;\text{for all} \;r,s\in A,\; l\ge 0.
  \end{equation}
  \begin{equation}\label{rbeqn2}
   \eta_l(rs)=\sum_{i+j=l}\eta_i(r)\eta(s), \;\text{for all}\; r,s\in A,\; l\ge 0.
   \end{equation}
   \begin{equation}\label{rbeqn3}
      \sum_{i+j=l}\phi_i(\xi_j(r)m)=\sum_{i+j=l}\eta_i(r)(\phi_j(m)); \;\text{for all}\; r\in A,\; m\in M\;  l\ge 0.
\end{equation}
\end{rem}
 Now we define  deformation  of finite order.

 \begin{defn}\label{rb3}
Let $M$ and $N$ be (left) $A$-module. A deformation of order n  of a module homomorphism $\phi:A\to B$ is a triple $(\xi_t, \eta_t, \phi_t)$, in which:

\begin{enumerate}
   \item  $\xi_t=\sum_{i=0}^{n}\xi_i t^i$ is a formal one-parameter deformation of order n for $M$.
  \item  $\eta_t=\sum_{i=0}^{n}\eta_it^i$ is a formal one-parameter deformation of order n  for  $N$.
  \item  $\phi_t=\sum_{i=0}^{n}\phi_it^i$, where $\phi_i:M\to N$ is  a module  homomorphism such that $\phi_t(\xi_t(r)m)=\eta_t(r)\phi_t(m),$ (modulo $t^{n+1}$) for all $r\in A$, $m\in M$ and $\phi_0=\phi$.
\end{enumerate}

\end{defn}
\begin{rem}\label{rbrem1}
  \begin{itemize}
    \item For $l=0$, conditions \ref{rbeqn1}, \ref{rbeqn2} and \ref{rbeqn3} are equivalent to the fact that $M$ and $N$ are  (left) $A$-modules and $\phi$ is a module homomorphism, respectively.
    \item For $l=1$, conditions \ref{rbeqn1}, \ref{rbeqn2} and \ref{rbeqn3} are equivalent to $\delta^1\xi_1=0,$ $\delta^1\eta_1=0$ and $\phi\xi_1-\eta_1\phi -\delta \phi_1=0,$ respectively.
         %From (ii), (iv) and (vi), we have $\mu_1$, $\nu_1$ and $\phi$ are G-equivariant.
          Thus for $l=1$, conditions  \ref{rbeqn1}, \ref{rbeqn2} and \ref{rbeqn3} are equivalent to saying that $(\xi_1,\eta_1,\phi_1)\in C^1(\phi)$ is a cocycle. In general, for $l\ge 2$, $(\xi_l,\eta_l,\phi_l)$ is just a 1-cochain in $C^1(\phi).$
          \item Condition (3) in Definition  \ref{rb3} is equivalent to $$\sum_{i+j=l}\phi_i(\xi_j(r)m)=\sum_{i+j=l}\eta_i(r)(\phi_j(m)); \;\text{for all}\; r\in A,\; m\in M\;  n\ge l\ge 0$$.
  \end{itemize}
\end{rem}
\begin{defn}
  The 1-cochain  $(\xi_1, \eta_1, \phi_1)$ in $C^1(\phi)$ is called infinitesimal of the   deformation $(\xi_t, \eta_t, \phi_t)$. In general, if $(\xi_i,\eta_i,\phi_i)=0,$ for $1\le i\le n-1$, and $(\xi_n,\eta_n,\phi_n)$ is a nonzero cochain in  $C^1(\phi,\phi)$, then $(\xi_n,\eta_n,\phi_n)$ is called n-infinitesimal of the deformation $(\xi_t, \eta_t, \phi_t)$.
\end{defn}
\begin{prop}
  The infinitesimal   $(\xi_1, \eta_1, \phi_1)$ of the  deformation  $(\xi_t, \eta_t, \phi_t)$ is a 1-cocycle in $C^1(\phi).$ In general, n-infinitesimal  $(\xi_n, \eta_n, \phi_n)$ is a cocycle in $C^1(\phi).$
\end{prop}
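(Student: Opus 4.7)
The plan is to compute $d^1(\xi_n,\eta_n,\phi_n)$ coordinate-by-coordinate and show each coordinate vanishes. By definition of $d^1$,
\[
d^1(\xi_n,\eta_n,\phi_n) = \bigl(\delta^1\xi_n,\; \delta^1\eta_n,\; \phi\xi_n - \eta_n\phi - \delta^0\phi_n\bigr),
\]
so I must establish the three identities $\delta^1\xi_n = 0$, $\delta^1\eta_n = 0$, and $\phi\xi_n - \eta_n\phi - \delta^0\phi_n = 0$. The inputs are the three defining relations \eqref{rbeqn1}, \eqref{rbeqn2}, \eqref{rbeqn3} of a deformation, read at order $l = n$, combined with the vanishing hypothesis $\xi_i = \eta_i = \phi_i = 0$ for $1 \le i \le n-1$ (and of course $\xi_0 = \xi$, $\eta_0 = \eta$, $\phi_0 = \phi$).

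For the first coordinate, \eqref{rbeqn1} at $l = n$ reads $\xi_n(rs) = \sum_{i+j=n}\xi_i(r)\xi_j(s)$. The vanishing hypothesis kills every summand except those with $(i,j) = (0,n)$ and $(n,0)$, leaving $\xi_n(rs) = \xi(r)\xi_n(s) + \xi_n(r)\xi(s)$. Interpreted through the $A$-bimodule structure on $End(M)$ induced by $\xi = \xi_0$, the right-hand side is exactly $r\cdot\xi_n(s) + \xi_n(r)\cdot s$, so $\delta^1\xi_n(r,s) = r\,\xi_n(s) - \xi_n(rs) + \xi_n(r)\,s = 0$. The identical argument applied to \eqref{rbeqn2} yields $\delta^1\eta_n = 0$.

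For the third coordinate, \eqref{rbeqn3} at $l = n$, under the same cancellations, collapses to
\[
\phi\bigl(\xi_n(r)m\bigr) + \phi_n(rm) = r\,\phi_n(m) + \eta_n(r)\bigl(\phi(m)\bigr).
\]
Using the $A$-bimodule structure on $Hom_k(M,N)$, one has $\delta^0\phi_n(r)(m) = r\,\phi_n(m) - \phi_n(rm)$, and by the definitions in Section \ref{rbsec3}, $(\phi\xi_n)(r)(m) = \phi(\xi_n(r)m)$ and $(\eta_n\phi)(r)(m) = \eta_n(r)(\phi(m))$. Rearranging the displayed identity therefore gives $(\phi\xi_n - \eta_n\phi - \delta^0\phi_n)(r)(m) = 0$, which combined with the first two coordinates yields $d^1(\xi_n,\eta_n,\phi_n) = 0$.

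The argument is essentially a bookkeeping exercise, so I do not expect a genuine obstacle; the only point requiring care is to unfold each Hochschild coboundary through the correct $A$-bimodule structure on $End(M)$, $End(N)$, and $Hom_k(M,N)$. The case $n=1$ was already flagged in Remark \ref{rbrem1}, and the general case is forced upon us because the vanishing hypothesis degenerates the sums $\sum_{i+j=n}$ to the same two-term expressions that appear in the infinitesimal case.
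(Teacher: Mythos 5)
Your proposal is correct and follows essentially the same route as the paper: the paper's proof simply points to Remark \ref{rbrem1} for $n=1$ and says the general case is similar, and your computation is exactly that argument written out in full, using the vanishing of $(\xi_i,\eta_i,\phi_i)$ for $1\le i\le n-1$ to collapse the sums in the defining relations at order $l=n$ to the three cocycle identities. The only difference is that you supply the details the paper leaves implicit.
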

\begin{proof}
  For n=1, proof is obvious from the Remark \ref{rbrem1}. For $n>1$, proof is similar.
\end{proof}
We can write Equations \ref{rbeqn1}, \ref{rbeqn2} and \ref{rbeqn3} for $l=n+1$ using the definition of coboundary $\delta$ as
\begin{equation}\label{rbeqn4}
  \delta^1\xi_{n+1}(a,b)=-\sum_{\substack{ i+j=n+1\\i,j>0}}\xi_i(a)\xi_j(b), \;\text{for all}\; a,b\in A.
  \end{equation}
  \begin{equation}\label{rbeqn5}
    \delta^1\eta_{n+1}(a,b)=-\sum_{\substack{ i+j=n+1\\i,j>0}}\eta_i(a)\eta_j(b), \;\text{for all}\; a,b\in A.
   \end{equation}
   \begin{eqnarray}\label{rbeqn6}
   % \nonumber % Remove numbering (before each equation)
     (\phi\xi_{n+1})(a)&-&(\eta_{n+1}\phi)(a) -\delta^0\phi_{n+1}(a) \notag\\
      &=& \sum_{\substack{ i+j=n+1\\i,j>0}}(\eta_i\phi_j)(a) -\sum_{\substack{ i+j=n+1\\i,j>0}}(\phi_i\xi_j)(a) ,
   \end{eqnarray}
 for all  $a\in A$ .
By using   Equations \ref{rbeqn4}, \ref{rbeqn5} and \ref{rbeqn6}   we have
%\begin{eqnarray}
% \nonumber % Remove numbering (before each equation)
\begin{align}
&d^1(\xi_{n+1},\eta_{n+1},\phi_{n+1})(a,b,x,y,p) \notag\\
&= ( -\sum_{\substack{ i+j=n+1\\i,j>0}}\xi_i(a)\xi_j(b),
    -\sum_{\substack{ i+j=n+1\\i,j>0}}\eta_i(x)\eta_j(y),\notag\\
   & \hspace{3cm}\sum_{\substack{ i+j=n+1\\i,j>0}}(\eta_i\phi_j)(p) -\sum_{\substack{ i+j=n+1\\i,j>0}}(\phi_i\xi_j)(p) ),
   \end{align}
%\end{eqnarray}
for all $a,b,x,y,p\in A$.\\
Define a 2-cochain $F_{n+1}$ by
\begin{align}
% \nonumber % Remove numbering (before each equation)
&F_{n+1}(a,b,x,y,p)\notag\\
&= ( -\sum_{\substack{ i+j=n+1\\i,j>0}}\xi_i(a)\xi_j(b),
    -\sum_{\substack{ i+j=n+1\\i,j>0}}\eta_i(x)\eta_j(y),\notag\\
   & \hspace{3cm}\sum_{\substack{ i+j=n+1\\i,j>0}}(\eta_i\phi_j)(p) -\sum_{\substack{ i+j=n+1\\i,j>0}}(\phi_i\xi_j)(p) ).
\end{align}

\begin{defn}
  The 2-cochain $F_{n+1}\in C^2(\phi)$ is called $(n+1)th$ obstruction cochain for extending the given  deformation of order n to a deformation of $\phi$ of order $(n+1)$. Now onwards we denote $F_{n+1}$ by $Ob_{n+1}(\phi_t)$.
\end{defn}
We have the following result.
\begin{thm}\label{thm4.1}
  The (n+1)th obstruction cochain $Ob_{n+1}(\phi_t)$ is a 2-cocycle.
\end{thm}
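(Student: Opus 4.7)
The plan is to show $d^2(Ob_{n+1}(\phi_t)) = 0$ by checking each of the three components separately. Writing $F_{n+1} = (F_{n+1}^{(1)}, F_{n+1}^{(2)}, F_{n+1}^{(3)})$ for the three summands making up the obstruction, the required identity $d^2 F_{n+1} = 0$ unpacks into three statements: (a) $\delta^2 F_{n+1}^{(1)} = 0$ in $C^3(A; End(M))$; (b) $\delta^2 F_{n+1}^{(2)} = 0$ in $C^3(A; End(N))$; and (c) $\phi F_{n+1}^{(1)} - F_{n+1}^{(2)}\phi - \delta^1 F_{n+1}^{(3)} = 0$ in $C^2(A; Hom_k(M,N))$. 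Throughout, I will use freely the inductive hypothesis that the deformation equations \ref{rbeqn1}, \ref{rbeqn2}, \ref{rbeqn3} hold for all $l \le n$.

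For (a) and (b), I would invoke the classical Gerstenhaber argument applied to the order-$n$ algebra deformations $\xi_t \colon A \to End(M)$ and $\eta_t \colon A \to End(N)$. Concretely, expand $\delta^2 F_{n+1}^{(1)}(a,b,c)$ using the $A$-bimodule structure $(rf)(m) = \xi(r)f(m)$, $(fs)(m)=f(\xi(s)m)$ on $End(M)$, then substitute $\xi_i(ab) = \sum_{p+q=i}\xi_p(a)\xi_q(b)$ (valid for each $i \le n$, which covers all indices appearing in $F_{n+1}^{(1)}$ since both summed indices are $\ge 1$ and total $n+1$). Associativity of composition in $End(M)$ then lets every resulting triple $\xi_p(a)\xi_q(b)\xi_r(c)$ with $p+q+r = n+1$ cancel across the four signed summands of $\delta^2$; a short case analysis on how many of $p,q,r$ are zero finishes (a). The argument for (b) is the same computation with $\eta, N$ in place of $\xi, M$.

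For (c), the key idea is to compute $\phi_t(\xi_t(a)\xi_t(b)m)$ in two ways modulo $t^{n+2}$. Since the order-$n$ relations give $\xi_t(a)\xi_t(b) \equiv \xi_t(ab) - F_{n+1}^{(1)}(a,b) t^{n+1} \pmod{t^{n+2}}$ as operators on $M$, applying $\phi_t$ and then the intertwining identity $\phi_t(\xi_t(r)m) \equiv \eta_t(r)\phi_t(m) \pmod{t^{n+1}}$ gives one expansion. On the other hand, applying condition (iii) of Definition \ref{rb2} twice to $\phi_t(\xi_t(a)(\xi_t(b)m))$ yields $\eta_t(a)\eta_t(b)\phi_t(m)$, which in turn equals $\eta_t(ab)\phi_t(m) - F_{n+1}^{(2)}(a,b)\phi_t(m) t^{n+1}$ modulo $t^{n+2}$. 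Equating the coefficients of $t^{n+1}$ and grouping the remaining mixed terms according to the pattern
\[
\delta^1 F_{n+1}^{(3)}(a,b)(m) = \eta(a)\, F_{n+1}^{(3)}(b)(m) - F_{n+1}^{(3)}(ab)(m) + F_{n+1}^{(3)}(a)(\xi(b)m)
\]
produces exactly identity (c).

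The main obstacle will be the bookkeeping for (c): each of $\phi F_{n+1}^{(1)}$, $F_{n+1}^{(2)}\phi$, and the three pieces of $\delta^1 F_{n+1}^{(3)}$ expands into a sum indexed by triples $(p,q,r)$ with $p+q+r=n+1$, and the cancellation is transparent only after splitting into cases according to which of $p,q,r$ vanish, so that the boundary contributions (coming from $\xi_0=\xi$, $\eta_0=\eta$, and $\phi_0=\phi$, which provide the bimodule and module-homomorphism structure) combine with the bulk terms. The inductive hypothesis is invoked precisely in those sub-sums where the running index is $\le n$, where we may replace $\xi_i(ab)$, $\eta_i(ab)$, and $\sum_{i+j=l}\phi_i(\xi_j(r)m)$ by their deformation expansions; every such substitution is legal because, as in parts (a) and (b), the indices that matter are bounded by $n$.
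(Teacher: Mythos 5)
Your proposal is correct, and while parts (a) and (b) coincide with the paper (which simply quotes the result $\delta^2 O_1=\delta^2 O_2=0$ from Yau's deformation theory of modules, exactly the Gerstenhaber-style computation you sketch), your treatment of the third component is a genuinely different and tidier route. The paper proves $\phi O_1-O_2\phi-\delta^1 O_3=0$ by brute force: it writes out all six sums in $\delta^1(O_3)(x,y)$, substitutes the order-$n$ relations into each one separately, and cancels term by term after re-indexing over triples $\alpha+\beta+\gamma=n+1$ (its equations \ref{rbeqn7}--\ref{rbeqn23}). You instead package the order-$n$ data into the three congruences $\xi_t(a)\xi_t(b)\equiv\xi_t(ab)-F^{(1)}_{n+1}(a,b)t^{n+1}$, $\eta_t(a)\eta_t(b)\equiv\eta_t(ab)-F^{(2)}_{n+1}(a,b)t^{n+1}$, and $\phi_t(\xi_t(r)m)\equiv\eta_t(r)\phi_t(m)-F^{(3)}_{n+1}(r)(m)t^{n+1}$, all modulo $t^{n+2}$, and then extract the coefficient of $t^{n+1}$ from the two evaluations of $\phi_t\bigl(\xi_t(a)\xi_t(b)m\bigr)$; the three terms of $\delta^1F^{(3)}_{n+1}(a,b)(m)=\eta(a)F^{(3)}_{n+1}(b)(m)-F^{(3)}_{n+1}(ab)(m)+F^{(3)}_{n+1}(a)(\xi(b)m)$ appear exactly as the three places where a defect term meets a zeroth-order coefficient, which I have checked gives precisely identity (c) with the correct signs. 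What your approach buys is conceptual transparency and far less index bookkeeping (the cancellations are absorbed into the congruences rather than displayed); what the paper's approach buys is a fully explicit, self-contained verification that does not require extending the intertwining relation $k[[t]]$-linearly to arguments in $M[[t]]$ -- the one small point you should state explicitly when writing this up, since in the second evaluation the relation is applied to $m'=\xi_t(b)m$ rather than to an element of $M$.
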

\begin{proof}
  We have,
  $$d^2 Ob_{n+1}=(\delta^2(O_1),\delta^2(O_2),\phi O_1-O_2\phi-\delta^1(O_3)),$$
  where $O_1$, $O_2$ and $O_3$ are given by  $$O_1(a,b)=  -\sum_{\substack{ i+j=n+1\\i,j>0}}\xi_i(a)\xi_j(b),$$
  $$O_2(x,y)= -\sum_{\substack{ i+j=n+1\\i,j>0}}\eta_i(x)\eta_j(y),$$
  $$O_3(p)= \sum_{\substack{ i+j=n+1\\i,j>0}}(\eta_i\phi_j)(p) -\sum_{\substack{ i+j=n+1\\i,j>0}}(\phi_i\xi_j)(p) .$$
  From \cite{DY1}, we have $\delta^2(O_1)=0$,  $\delta^2(O_2)=0$. So, to prove that $d^2Ob_{n+1}=0$, it remains to show that $\phi O_1-O_2\phi-\delta^1(O_3)=0.$
  To prove that $\phi O_1-O_2\phi-\delta^1(O_3)=0$, we use similar ideas as have been used in \cite{AM} and \cite{DY}.
  We have,
  \begin{align}\label{rbeqn7}
  % \nonumber % Remove numbering (before each equation)
   (\phi O_1 -O_2\phi )(x,y)=-\sum_{\substack{ i+j=n+1\\i,j>0}}\phi\xi_i(x)\xi_j(y) +\sum_{\substack{ i+j=n+1\\i,j>0}}\eta_i(x)\eta_j(y)\phi
 \end{align}
  and \begin{eqnarray}\label{rbeqn8}
      % \nonumber % Remove numbering (before each equation)
        \delta^1(O_3)(x,y) &= \sum_{\substack{ i+j=n+1\\i,j>0}}\eta_0(x)(\eta_i\phi_j)(y)-\sum_{\substack{ i+j=n+1\\i,j>0}}(\eta_i\phi_j)(xy)\notag\\ & +\sum_{\substack{ i+j=n+1\\i,j>0}}(\eta_i\phi_j)(x)\xi_0(y)-\sum_{\substack{ i+j=n+1\\i,j>0}}\eta_0(x)(\phi_i\xi_j)(y)\notag\\
       & +\sum_{\substack{ i+j=n+1\\i,j>0}}(\phi_i\xi_j)(xy)-\sum_{\substack{ i+j=n+1\\i,j>0}}(\phi_i\xi_j)(x)\xi_0(y).
\end{eqnarray}
From Equation \ref{rbeqn3}, we have
\begin{eqnarray}\label{rbeqn9}
% \nonumber % Remove numbering (before each equation)
   \phi_j\xi_0(y)&=& \sum_{\substack{ \alpha+\beta=j\\\alpha,\beta\ge 0}}\eta_{\alpha}(y)\phi_{\beta}-\sum_{\substack{ p+q=j\\1\le q\le j}}\phi_p\xi_q(y).
\end{eqnarray}
Substituting expression for $\phi_j\xi_0$ from Equation \ref{rbeqn9}, in the third sum on the right hand side of Equation \ref{rbeqn8} we can rewrite it as
\begin{eqnarray}\label{rbeqn10}
% \nonumber % Remove numbering (before each equation)
 \sum_{\substack{ i+j=n+1\\i,j>0}}(\eta_i\phi_j)(x)\xi_0(y)  &=&\sum_{\substack{ i+j=n+1\\i,j>0}}\sum_{\substack{ \alpha+\beta=j\\\alpha,\beta\ge 0}}\eta_i(x)\eta_{\alpha}(y)\phi_{\beta}\notag\\
 &&-\sum_{\substack{ i+j=n+1\\i,j>0}}\sum_{\substack{ p+q=j\\1\le q\le j}}\eta_i(x)\phi_p\xi_q(y).
\end{eqnarray}

         The first sum of Equation \ref{rbeqn10} splits into two sums as
         \begin{align}\label{rbeqn13}
         % \nonumber % Remove numbering (before each equation)
        \sum_{\substack{ i+j=n+1\\i,j>0}}\sum_{\substack{ \alpha+\beta=j\\\alpha,\beta\ge 0}}\eta_i(x)\eta_{\alpha}(y)\phi_{\beta}=\sum_{\substack{ i+j=n+1\\i,j>0}}\sum_{\substack{ \alpha+\beta=j\\\beta> 0}}\eta_i(x)\eta_{\alpha}(y)\phi_{\beta}+ \sum_{\substack{ i+j=n+1\\i,j>0}}\eta_i(x)\eta_j(y)\phi\notag.\\
         \end{align}
         The second sum on the r.h.s. of  Equation \ref{rbeqn13} appears as second  sum on the  r.h.s. of Equation \ref{rbeqn7}.
         By applying a similar argument to the fourth sum on the r.h.s. of Equation \ref{rbeqn8}, using Equation \ref{rbeqn3} on $\phi_k\mu_0(y,z)$, one can rewrite it as
         \begin{eqnarray}\label{rbeqn14}
% \nonumber % Remove numbering (before each equation)
  -\sum_{\substack{ i+j=n+1\\i,j>0}}\eta_0(x)(\phi_i\xi_j)(y)&=& \sum_{\substack{ i+j=n+1\\i,j>0}}\sum_{\substack{ \alpha+\beta=i\\1\le \alpha\le i 0}}\eta_{\alpha}(x)\phi_{\beta}\xi_j(y)\notag\\
 &&-\sum_{\substack{ i+j=n+1\\i,j>0}}\sum_{\substack{ p+q=i\\p, q\ge  0}}\phi_p\xi_q(x)\xi_j(y).
\end{eqnarray}
The second sum of Equation   \ref{rbeqn14} splits into two sums as
\begin{eqnarray}\label{rbeqn12}
% \nonumber % Remove numbering (before each equation)
 -\sum_{\substack{ i+j=n+1\\i,j>0}}\sum_{\substack{ p+q=i\\p, q\ge  0}}\phi_p\xi_q(x)\xi_j(y)  &=&  -\sum_{\substack{ i+j=n+1\\i,j>0}}\sum_{\substack{ p+q=i\\p>  0}}\phi_p\xi_q(x)\xi_j(y) \\
 && -\sum_{\substack{ i+j=n+1\\i,j>0}}\phi\xi_i(x)\xi_j(y).
 \end{eqnarray}

As above second sum on r.h.s. of Equation  \ref{rbeqn12} is first sum on the r.h.s. of Equation \ref{rbeqn7}.

In the first sum on the r.h.s. of Equation \ref{rbeqn8}, we use Equation \ref{rbeqn2} to substitute $\eta_0(x)\eta_i(y)$ to obtain
\begin{eqnarray}\label{rbeqn15}
\sum_{\substack{ i+j=n+1\\i,j>0}}\eta_0(x)\eta_i(y)\phi_j&=&\sum_{\substack{ i+j=n+1\\i,j>0}}\eta_i(xy)\phi_j\notag\\
 &&-\sum_{\substack{ i+j=n+1\\i,j>0}}\sum_{\substack{ \alpha+\beta=i\\1\le \alpha\le i 0}}\eta_{\alpha}(x)\eta_{\beta}(y)\phi_j.
 \end{eqnarray}
First sum on the r.h.s. of Equation \ref{rbeqn15} cancels with the second sum on the r.h.s. of Equation \ref{rbeqn8}.
In the sixth sum on the r.h.s. of Equation \ref{rbeqn8}, we use Equation \ref{rbeqn1} to substitute $\xi_j(x)\xi_0(y)$ to obtain
\begin{eqnarray}\label{rbeqn16}
-\sum_{\substack{ i+j=n+1\\i,j>0}}\phi_i\xi_j(x)\xi_0(y)&=& \sum_{\substack{ i+j=n+1\\i,j>0}}\sum_{\substack{ \alpha+\beta=j\\1\le \beta\le j 0}}\phi_i\xi_{\alpha}(x)\xi_{\beta}(y)\notag.\\
 &&-\sum_{\substack{ i+j=n+1\\i,j>0}}\phi_i\xi_j(xy).
\end{eqnarray}

The second  sum on the r.h.s. of Equation \ref{rbeqn16} cancels with the fifth sum on the r.h.s. of Equation \ref{rbeqn8}.

 From our previous arguments we have,
\begin{eqnarray}\label{rbeqn20}
% \nonumber % Remove numbering (before each equation)
  & &\phi O_1-O_2\phi-\delta^2(O_3)(x,y)\nonumber\\
   &=& -\sum_{\substack{ i+j=n+1\\i,j>0}}\sum_{\substack{ p+q=j\\1\le q\le j}}\eta_i(x)\phi_p\xi_q(y)+ \sum_{\substack{ i+j=n+1\\i,j>0}}\sum_{\substack{ \alpha+\beta=i\\1\le \alpha\le i }}\eta_{\alpha}(x)\phi_{\beta}\xi_j(y)\notag \\
   &&-\sum_{\substack{ i+j=n+1\\i,j>0}}\sum_{\substack{ p+q=i\\p>  0}}\phi_p\xi_q(x)\xi_j(y)+ \sum_{\substack{ i+j=n+1\\i,j>0}}\sum_{\substack{ \alpha+\beta=j\\1\le \beta\le j 0}}\phi_i\xi_{\alpha}(x)\xi_{\beta}(y)\notag\\
   &&-\sum_{\substack{ i+j=n+1\\i,j>0}}\sum_{\substack{ \alpha+\beta=i\\1\le \alpha\le i 0}}\eta_{\alpha}(x)\eta_{\beta}(y)\phi_j+\sum_{\substack{ i+j=n+1\\i,j>0}}\sum_{\substack{ \alpha+\beta=j\\\beta> 0}}\eta_i(x)\eta_{\alpha}(y)\phi_{\beta}.
    \end{eqnarray}
      Moreover, we have \begin{eqnarray}\label{rbeqn21}
              % \nonumber % Remove numbering (before each equation)
                \sum_{\substack{ i+j=n+1\\i,j>0}}\sum_{\substack{ p+q=j\\1\le q\le j}}\eta_i(x)\phi_p\xi_q(y) &=& \sum_{\substack{ i+j=n+1\\i,j>0}}\sum_{\substack{ \alpha+\beta=i\\1\le \alpha\le i }}\eta_{\alpha}(x)\phi_{\beta}\xi_j(y)\notag \\
                &=&\sum_{\substack{\alpha+\beta+\gamma=n+1\\\alpha,\gamma>0\\\beta\ge 0}}\eta_{\alpha}(x)\phi_{\beta}\xi_{\gamma}(y).
              \end{eqnarray}
              \begin{eqnarray}\label{rbeqn22}
              % \nonumber % Remove numbering (before each equation)
                \sum_{\substack{ i+j=n+1\\i,j>0}}\sum_{\substack{ p+q=j\\1\le q\le j}} \phi_p\xi_q(x)\xi_j(y)&=& \sum_{\substack{ i+j=n+1\\i,j>0}}\sum_{\substack{ \alpha+\beta=i\\1\le \alpha\le i }}\phi_i\xi_{\alpha}(x)\xi_{\beta}(y)\notag \\
                &=&\sum_{\substack{\alpha+\beta+\gamma=n+1\\\alpha,\gamma>0\\\beta\ge 0}}\phi_{\alpha}\xi_{\beta}(x)\xi_{\gamma}(y).
              \end{eqnarray}
              \begin{eqnarray}\label{rbeqn23}
              % \nonumber % Remove numbering (before each equation)
                \sum_{\substack{ i+j=n+1\\i,j>0}}\sum_{\substack{ \alpha+\beta=j\\1\le \beta\le j}} \eta_{\alpha}(x)\eta_{\beta}(y)\phi_j&=& \sum_{\substack{ i+j=n+1\\i,j>0}}\sum_{\substack{ \alpha+\beta=i\\1\le \alpha\le i }}\eta_i(x)\eta_{\alpha}(y)\phi_{\beta}\notag \\
                &=&\sum_{\substack{\alpha+\beta+\gamma=n+1\\\alpha,\gamma>0\\\beta\ge 0}}\eta_{\alpha}(x)\eta_{\beta}(y)\phi_{\gamma}.
              \end{eqnarray}
 Hence, from Equations \ref{rbeqn20}, \ref{rbeqn21}, \ref{rbeqn22} and  \ref{rbeqn23},  we have $$\phi O_1-O_2\phi-\delta^2(O_3)(x,y)=0.$$

  This completes the proof of the theorem.
\end{proof}
\begin{thm}\label{thm4.2}
Let $(\xi_t,\eta_t,\phi_t)$ be a deformation of $\phi$ of order n. Then $(\xi_t,\eta_t,\phi_t)$ extends to a deformation of order $n+1$ if and only if cohomology class of $(n+1)$th obstruction $Ob_{n+1}(\phi_t)$  vanishes.
\end{thm}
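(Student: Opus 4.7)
The plan is to recognize that the definition of $Ob_{n+1}(\phi_t)$ has been engineered precisely so that extending the deformation to order $n+1$ is equivalent to solving the coboundary equation $d^1(\xi_{n+1},\eta_{n+1},\phi_{n+1}) = Ob_{n+1}(\phi_t)$ in $C^2(\phi)$. Once that reformulation is in place, the if-and-only-if statement is a tautology combined with the preceding theorem that $Ob_{n+1}(\phi_t)$ is a 2-cocycle.

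First I would unwind what an extension to order $n+1$ means. Given the order-$n$ deformation $(\xi_t,\eta_t,\phi_t)$, an extension amounts to choosing a triple $(\xi_{n+1},\eta_{n+1},\phi_{n+1})\in C^1(A;\mathrm{End}(M))\oplus C^1(A;\mathrm{End}(N))\oplus C^0(A;\mathrm{Hom}_k(M,N))=C^1(\phi)$ so that equations \ref{rbeqn1}, \ref{rbeqn2}, \ref{rbeqn3} hold at level $l=n+1$. Separating the terms involving indices $0$ and $n+1$ from those involving intermediate indices, these three level-$(n+1)$ equations are exactly the equations \ref{rbeqn4}, \ref{rbeqn5} and \ref{rbeqn6} that were used to define $Ob_{n+1}(\phi_t)$; their left-hand sides assemble into $d^1(\xi_{n+1},\eta_{n+1},\phi_{n+1})$, while the right-hand sides assemble into $Ob_{n+1}(\phi_t)$.

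With this reformulation, both directions are immediate. For the forward direction, if $(\xi_{n+1},\eta_{n+1},\phi_{n+1})$ extends the deformation then $Ob_{n+1}(\phi_t) = d^1(\xi_{n+1},\eta_{n+1},\phi_{n+1})$, so it is a coboundary and its cohomology class vanishes. For the backward direction, the previous theorem gives that $Ob_{n+1}(\phi_t)$ is a 2-cocycle; if its class vanishes, then $Ob_{n+1}(\phi_t)=d^1(u,v,w)$ for some $(u,v,w)\in C^1(\phi)$, and setting $\xi_{n+1}=u$, $\eta_{n+1}=v$, $\phi_{n+1}=w$ produces a triple satisfying \ref{rbeqn4}, \ref{rbeqn5}, \ref{rbeqn6}, i.e.\ the order-$(n+1)$ deformation equations.

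The only step requiring care, and the main obstacle, is the bookkeeping in the first paragraph: checking that the formal identity $d^1(\xi_{n+1},\eta_{n+1},\phi_{n+1}) = Ob_{n+1}(\phi_t)$ really captures all three of \ref{rbeqn1}, \ref{rbeqn2}, \ref{rbeqn3} at order $n+1$. This is a matter of matching the component-wise definition of $d^1$ against the three displayed equations before the definition of $F_{n+1}$; once done, no further computation is needed, since the cocycle property of $Ob_{n+1}(\phi_t)$ was established in the preceding theorem.
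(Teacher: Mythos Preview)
Your proposal is correct and follows essentially the same approach as the paper: both argue that an order-$(n+1)$ extension exists precisely when the equation $d^1(\xi_{n+1},\eta_{n+1},\phi_{n+1})=Ob_{n+1}(\phi_t)$ can be solved, which is the same as saying the obstruction is a coboundary. The paper's write-up is slightly more explicit in writing down the extended deformation $(\tilde\xi_t,\tilde\eta_t,\tilde\phi_t)=(\xi_t+\xi_{n+1}t^{n+1},\eta_t+\eta_{n+1}t^{n+1},\phi_t+\phi_{n+1}t^{n+1})$, but this is exactly what your final sentence in the backward direction is doing.
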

\begin{proof}
  Suppose that a deformation $(\xi_t,\eta_t,\phi_t)$ of $\phi$ of order n extends to a deformation of order $n+1$. This implies that Equations  \ref{rbeqn1},\ref{rbeqn2} and \ref{rbeqn3} are satisfied for $r=n+1.$  Observe that this implies $Ob_{n+1}(\phi_t)=d^1(\xi_{n+1},\eta_{n+1},\phi_{n+1})$. So cohomology class of  $Ob_{n+1}(\phi_t)$ vanishes. Conversely, suppose that  cohomology class of  $Ob_{n+1}(\phi_t)$ vanishes, that is, $Ob_{n+1}(\phi_t)$ is a coboundary. Let
  $$ Ob_{n+1}(\phi_t)=d^1(\xi_{n+1},\eta_{n+1},\phi_{n+1}),$$
  for some 1-cochain  $(\xi_{n+1},\eta_{n+1},\phi_{n+1})\in C^1(\phi).$ Take
  $$(\tilde{\xi_t},\tilde{\eta_t},\tilde{\phi_t})=(\xi_t+\xi_{n+1}t^{n+1},\eta_t+\eta_{n+1}t^{n+1},\phi_t+\phi_{n+1}t^{n+1})$$.
  Observe that $(\tilde{\xi_t},\tilde{\eta_t},\tilde{\phi_t})$ satisfies Equations  \ref{rbeqn1},\ref{rbeqn2} and \ref{rbeqn3} for $0\le l\le n+1$. So  deformation  $(\tilde{\xi_t},\tilde{\eta_t},\tilde{\phi_t})$ of $\phi $ is an  extension of $(\xi_t,\eta_t,\phi_t)$ and its order is   $n+1$.

\end{proof}
\begin{cor}
  If $H^2(\phi)=0$, then every 1-cocycle in $C^1(\phi)$ is an infinitesimal of some formal deformation of $\phi.$
\end{cor}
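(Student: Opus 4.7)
The plan is to proceed by induction on the order of the deformation, using the two theorems immediately preceding the corollary as the inductive engine. Given a $1$-cocycle $(\xi_1,\eta_1,\phi_1)\in C^1(\phi)$, I would first build a deformation of order $1$ by setting
\[
\xi_t=\xi+\xi_1 t,\qquad \eta_t=\eta+\eta_1 t,\qquad \phi_t=\phi+\phi_1 t.
\]
By Remark \ref{rbrem1}, the equations (\ref{rbeqn1}), (\ref{rbeqn2}), (\ref{rbeqn3}) at $l=0$ are exactly the statements that $\xi$ and $\eta$ are module structures and $\phi$ is a module homomorphism, while at $l=1$ they are equivalent to $(\xi_1,\eta_1,\phi_1)$ being a $1$-cocycle in $C^1(\phi)$. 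Thus this truncation is a genuine deformation of order $1$ whose infinitesimal is the prescribed cocycle.

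For the inductive step, suppose I have already constructed a deformation $(\xi_t,\eta_t,\phi_t)$ of $\phi$ of order $n\ge 1$ whose infinitesimal is $(\xi_1,\eta_1,\phi_1)$. By the obstruction theorem just proved, the $(n{+}1)$th obstruction cochain $Ob_{n+1}(\phi_t)\in C^2(\phi)$ is a $2$-cocycle. The hypothesis $H^2(\phi)=0$ forces $Ob_{n+1}(\phi_t)$ to be a coboundary, so its cohomology class vanishes. The extension theorem then supplies a triple $(\xi_{n+1},\eta_{n+1},\phi_{n+1})\in C^1(\phi)$ such that
\[
(\xi_t+\xi_{n+1}t^{n+1},\;\eta_t+\eta_{n+1}t^{n+1},\;\phi_t+\phi_{n+1}t^{n+1})
\]
is a deformation of $\phi$ of order $n+1$ extending the given one. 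The infinitesimal is unchanged in passing from order $n$ to order $n+1$, so it remains $(\xi_1,\eta_1,\phi_1)$.

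Iterating this construction yields a compatible tower of deformations of every finite order, and assembling the coefficients gives a formal power series $(\xi_t,\eta_t,\phi_t)=(\sum_{i\ge 0}\xi_i t^i,\sum_{i\ge 0}\eta_i t^i,\sum_{i\ge 0}\phi_i t^i)$ satisfying (\ref{rbeqn1}), (\ref{rbeqn2}), (\ref{rbeqn3}) for all $l\ge 0$. By Definition \ref{rb2}, this is a formal one-parameter deformation of $\phi$, and by construction its infinitesimal is the given $1$-cocycle. There is essentially no obstacle here beyond correctly invoking the preceding two theorems; the entire content of the corollary is the observation that the obstructions, which a priori only live in $H^2(\phi)$, automatically vanish under the stated hypothesis, so the inductive extension never gets stuck.
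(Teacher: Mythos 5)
Your argument is correct and is exactly the intended one: the paper states this corollary without proof, and the standard inductive extension — order-$1$ truncation from the cocycle, then repeated use of the obstruction-cocycle theorem and the extension criterion under $H^2(\phi)=0$ — is what the corollary is meant to record. Your observation that the coefficients stabilize at each stage, so the tower assembles into a genuine formal deformation with the prescribed infinitesimal, completes the proof.
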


\section{Equivalence of  deformations}\label{rbsec5}
 Recall from \cite{DY1} that a formal isomorphism between the  deformations $\xi_t$ and $\tilde{\xi_t}$ of a module  $M$ is a $k[[t]]$-linear automorphism  $\Psi_t:M[[t]]\to M[[t]]$ of the  form  $\Psi_t=\sum_{i\ge 0}\psi_it^i$, where each $\psi_i$ is a $k$-linear map $M\to M$, $\psi_0(a)=a$, for all $a\in A$ and $\tilde{\xi_t}(r)\Psi_t(m)=\Psi_t(\xi_t(r)m),$ for all $r\in A$, $m\in M$
\begin{defn}
 Let   $(\xi_t,\eta_t,\phi_t)$  and $(\tilde{\xi_t},\tilde{\eta_t},\tilde{\phi_t})$ be two  deformations of $\phi$.  A  formal isomorphism from $(\xi_t,\eta_t,\phi_t)$  to  $(\tilde{\xi_t},\tilde{\eta_t},\tilde{\phi_t})$ is a pair  $(\Psi_t,\Theta_t)$, where $\Psi_t:M[[t]]\to M[[t]]$ and $\Theta_t:N[[t]]\to N[[t]]$ are formal isomorphisms from $\xi_t$ to $\tilde{\xi_t}$ and $\eta_t$ to $\tilde{\eta_t}$, respectively,  such that $$\tilde{\phi_t}\circ\Psi_t=\Theta_t\circ\phi_t.$$
  Two formal  deformations $(\xi_t,\eta_t,\phi_t)$  and $(\tilde{\xi_t},\tilde{\eta_t},\tilde{\phi_t})$ are said to be equivalent if there exists a formal isomorphism  $(\Psi_t,\Theta_t)$ from $(\xi_t,\eta_t,\phi_t)$ to  $(\tilde{\xi_t},\tilde{\eta_t},\tilde{\phi_t})$.
\end{defn}
\begin{defn}
  Any  deformation of $\phi:M\to N$ that is equivalent to the deformation $(\xi_0,\eta_0,\phi)$ is said to be a trivial deformation.
\end{defn}
\begin{thm}
  The cohomology class of the infinitesimal of a  deformation $(\xi_t,\eta_t,\phi_t)$ of $\phi:A\to B$ is determined by the equivalence class of $(\xi_t,\eta_t,\phi_t)$.
\end{thm}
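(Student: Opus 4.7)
The plan is to take any formal isomorphism $(\Psi_t,\Theta_t)$ realizing the equivalence between $(\xi_t,\eta_t,\phi_t)$ and $(\tilde\xi_t,\tilde\eta_t,\tilde\phi_t)$, expand the three intertwining relations in powers of $t$, and read off the coefficient of $t^1$ in each. Write $\Psi_t=\mathrm{id}_M+\psi_1 t+\psi_2 t^2+\cdots$ and $\Theta_t=\mathrm{id}_N+\theta_1 t+\theta_2 t^2+\cdots$, with $\psi_i\in\mathrm{End}(M)$ and $\theta_i\in\mathrm{End}(N)$. The aim is to show that the three resulting $t^1$-identities assemble into a single statement of the form $(\xi_1,\eta_1,\phi_1)-(\tilde\xi_1,\tilde\eta_1,\tilde\phi_1)=d^0(\psi_1,\theta_1)$, so that the two infinitesimals represent the same class in $H^1(\phi)$.

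Concretely, from $\tilde\xi_t(r)\Psi_t(m)=\Psi_t(\xi_t(r)m)$ together with $\xi_0=\tilde\xi_0=\xi$ and $\psi_0=\mathrm{id}_M$, the $t^1$-coefficient reads $\tilde\xi_1(r)m+\xi(r)\psi_1(m)=\xi_1(r)m+\psi_1(\xi(r)m)$, which, in view of the $A$-bimodule structure on $\mathrm{End}(M)$ recalled in Section \ref{rbsec2} and the formula for $\delta^0$, rearranges to $\xi_1-\tilde\xi_1=\delta^0\psi_1$. The same manipulation with $\eta_t$ in place of $\xi_t$ and $\theta_1$ in place of $\psi_1$ yields $\eta_1-\tilde\eta_1=\delta^0\theta_1$. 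Comparing $t^1$-coefficients in $\tilde\phi_t\circ\Psi_t=\Theta_t\circ\phi_t$ gives $\tilde\phi_1+\phi\circ\psi_1=\phi_1+\theta_1\circ\phi$, i.e.\ $\phi_1-\tilde\phi_1=\phi\psi_1-\theta_1\phi$.

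Bundling these three identities in $C^1(\phi)$ gives
\[
(\xi_1,\eta_1,\phi_1)-(\tilde\xi_1,\tilde\eta_1,\tilde\phi_1)=(\delta^0\psi_1,\,\delta^0\theta_1,\,\phi\psi_1-\theta_1\phi),
\]
which is precisely the value of the coboundary $d^0$ applied to $(\psi_1,\theta_1)$, obtained from the formula $d^n(u,v,w)=(\delta^n u,\delta^n v,\phi u-v\phi-\delta^{n-1}w)$ at $n=0$ (the $w$-term is vacuous since $C^{-1}(A;\mathrm{Hom}_k(M,N))=0$). Hence the infinitesimals of the two deformations represent the same class in $H^1(\phi)$, so this class depends only on the equivalence class of $(\xi_t,\eta_t,\phi_t)$.

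The only delicate point is purely notational: one must keep straight the bimodule actions used to define $\delta^0$ on $\mathrm{End}(M)$, $\mathrm{End}(N)$ and $\mathrm{Hom}_k(M,N)$, so that the three $t^1$-relations really match the three components of $d^0(\psi_1,\theta_1)$ with consistent signs. No deeper idea is needed; this is the standard Gerstenhaber-type invariance argument transported into the complex $C^{\ast}(\phi)$.
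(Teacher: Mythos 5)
Your proof is correct and follows essentially the same route as the paper: expand the intertwining relations $\tilde\xi_t\Psi_t=\Psi_t\xi_t$, $\tilde\eta_t\Theta_t=\Theta_t\eta_t$, $\tilde\phi_t\Psi_t=\Theta_t\phi_t$ in powers of $t$, read off the $t^1$-coefficients, and assemble the three identities $\xi_1-\tilde\xi_1=\delta^0\psi_1$, $\eta_1-\tilde\eta_1=\delta^0\theta_1$, $\phi_1-\tilde\phi_1=\phi\psi_1-\theta_1\phi$ into a coboundary in $C^1(\phi)$. You are in fact slightly more careful than the paper, which leaves the coefficient extraction implicit and writes the resulting coboundary as $d^1(\psi_1,\theta_1,0)$ where, as you correctly note, the degree-$0$ differential applied to $(\psi_1,\theta_1)\in C^0(A;\mathrm{End}(M))\oplus C^0(A;\mathrm{End}(N))$ is what is really meant.
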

\begin{proof}
  Let  $(\Psi_t,\Theta_t)$ from  $(\xi_t,\eta_t,\phi_t)$ to  $(\tilde{\xi_t},\tilde{\eta_t},\tilde{\phi_t})$ be a  formal  isomorphism. So, we have  $\tilde{\xi_t}\Psi_t=\Psi_t\xi_t,$ $\tilde{\eta_t}\Theta_t=\Theta_t \eta_t,$ and   $\tilde{\phi_t}\circ\Psi_t=\Theta_t\circ\phi_t.$ This implies that $\xi_1-\tilde{\xi_1}=\delta^0\psi_1$, $\eta_1-\tilde{\eta_1}=\delta^0\theta_1$ and $\phi_1-\tilde{\phi_1}=\phi\psi_1-\theta_1\phi$. So we have $d^1(\psi_1,\theta_1,0)=(\xi_1,\eta_1,\phi_1)-(\tilde{\xi_1},\tilde{\eta_1},\tilde{\phi_1}).$ This finishes the proof.
\end{proof}

\section{Some Examples}\label{chm6}
In this section we give  examples of deformations of modules and module homomorphisms. 
\begin{exmpl}\label{mode1}
Let $k$ be a field. Take $A=k$. Let $M$ be a vector space over $k$. Then $M$ is a module over the associative algebra $A=k.$  Let $\xi_t=\sum_{i=0}^{\infty}\xi_it^i $ be a deformation of $M.$ By definition $\xi_i\in Hom_k(k,End(M)),$ $\forall i\ge 0$ and  $\xi_0(r)m=\xi(r)m=rm,$ $\forall r\in k,\; m\in M.$  Also, by definition we have, $\forall r, s\in k,$ 

$ \xi_t(rs)=\xi_t(r)\xi_t(s)$, that is,
\begin{eqnarray}\label{mod1}
\sum_{i=0}^{\infty}\xi_i(rs)t^i 
&=&\sum_{i=0}^{\infty}\xi_i(r)t^i \sum_{j=0}^{\infty}\xi_j(s)t^j\notag\\
&=& \sum_{l=0}^{\infty} \sum_{\substack{ i,j\ge 0,\\i+j=l}}\xi_i(r)\xi_j(s) t^l 
\end{eqnarray}
 Since every $\xi_i(r)=r\xi_i(1)$, $\forall \xi_i\in Hom_k(k,End(M)),$ using Equation \ref{mod1}, we have 
 \begin{eqnarray}\label{mod2}
 rs\sum_{l=0}^{\infty}\xi_l(1)t^l &=rs&\sum_{l=0}^{\infty} \sum_{\substack{ i,j\ge 0,\\i+j=l}}\xi_i(1)\xi_j(1) t^l 
 \end{eqnarray}
 From Equation \ref{mod2}, we have 
 \begin{equation}\label{mod3}
 \xi_l(1)-\sum_{\substack{ i,j\ge 0,\\i+j=l}}\xi_i(1)\xi_j(1)=0,\; \forall l\ge 0.
 \end{equation}
  From Equation \ref{mod3}, we have 
 \begin{enumerate}
 \item For $l=1$,  $\xi_1(1)-2\xi_1(1)=0$, that is $\xi_1=0.$
 \item For $l=2$,  $\xi_2(1)-\xi_1(1)\xi_1(1)-2\xi_2(1)=0$, that is $\xi_2=0.$
 \item We can use induction and conclude that $\xi_i=0,$ for all $i\ge 1.$
 \item Thus we conclude that  deformation is rigid, in case $A=k$.
 \end{enumerate}
\end{exmpl}
\begin{exmpl}
Let $k$ be a field. Take $A=k$. Let $M$  $N$ be a vector spaces over $k$. As in the previous example, $M$ and $N$ are modules over $A=k.$ Let $\xi_t$ and $\eta_t$ be deformations of $M$ and $N$, respectively. Then by using Example \ref{mode1}, $\xi_t=\xi_0$ and $\eta_t=\eta_0$. 
Let $\phi:M\to N$ be a module homomorphisms. Choose any $\phi_i\in Hom_k(M,N).$ Write $\phi_t=\sum_{i=0}^\infty \phi_it^i.$ We have 
$$\phi_t(\xi_t(r)m)=\sum_{i=0}^\infty \phi_i(\xi_0(r)m)t^i=\sum_{i=0}^\infty \phi_i(rm)t^i=\sum_{i=0}^\infty r\phi_i(m)t^i$$
and 
$$ \eta_t(r)\phi_t(m)=\eta_0(r)\sum_{i=0}^\infty \phi_i(m)t^i=\sum_{i=0}^\infty r\phi_i(m)t^i.$$
Thus $\phi_t(\xi_t(r)m)=\eta_t(r)\phi_t(m)$ and hence  $\phi_t=\sum_{i=0}^\infty \phi_it^i$ is a deformation of $\phi.$
\end{exmpl}
%\section*{References:}

%\section*{References}
%\bibliography{rb2}
\end{document}